\newtheorem{assumption}{Assumption}
\newtheorem{remark}{Remark}
\newtheorem{definition}{Definition}
\newtheorem{proposition}{Proposition}
\newtheorem{lemma}{Lemma}
\newtheorem{theorem}{Theorem}
\DeclareMathOperator*{\minimize}{min}
\DeclareMathOperator{\st}{s.t.}
\DeclareMathOperator{\col}{col}
\title{\LARGE \bf
 Adversarially and Distributionally Robust Virtual Energy Storage Systems via the Scenario Approach}
\author{
Georgios Pantazis,
Nicola Mignoni,
Raffaele Carli, 
Mariagrazia Dotoli, 
Sergio Grammatico
\thanks{The work of N. Mignoni, R. Carli, and M. Dotoli was supported by the NRRP - Mission 4 Component 2 Investment 1.3 - Call for tender No. 341 of March 15, 2022 of MUR - project ``NEST (Network 4 Energy Sustainable Transition)'' (project no. PE00000021) and NRRP - Mission 4 Component 2 Investment 1.1  - Call for ``Projects of significant national interest - Progetti di rilevante interesse nazionale (PRIN)'' - Decree no. 104 of February 2, 2022 of MUR (project: CORRECT, code: 202248PWRY).}
\thanks{G. Pantazis is with the Dynamics and Control group of Eindhoven University of Technology, The Netherlands (e-mail: {\tt\small{g.pantazis33@tue.nl}}). N. Mignoni, R. Carli, and M. Dotoli are with the Department of Electrical and Information Engineering of the Polytechnic of Bari, Italy (e-mail: {\tt\small\{nicola.mignoni, raffaele.carli, mariagrazia.dotoli\}@poliba.it}). S. Grammatico is with the Delft Center for Systems and Control of the Technical University of Delft, Delft, the Netherlands (e-mail: {\tt\small S.Grammatico@tudelft.nl}).}
}
\begin{document}
\maketitle
\thispagestyle{empty}
\pagestyle{empty}

\begin{abstract}
We study virtual energy storage services based on the aggregation of EV batteries in parking lots under time-varying, uncertain EV departures and state-of-charge limits. We propose a convex data-driven scheduling framework in which a parking lot manager provides storage services to a prosumer community while interacting with a retailer. The framework yields finite-sample, distribution-free guarantees on constraint violations and allows the parking lot manager to explicitly tune the trade-off between economic performance and operational safety. To enhance reliability under imperfect data, we extend the formulation to adversarial perturbations of the training samples and Wasserstein distributional shifts, obtaining robustness certificates against both corrupted data and out-of-distribution uncertainty. Numerical studies confirm the predicted profit-risk trade-off and show consistency between the theoretical certificates and the observed violation levels.
\end{abstract}

\section{Introduction}
Repurposing electric vehicle (EV) charging facilities in parking lots as \emph{virtual energy storage systems} (VESS) allows the distribution system operator (DSO) to leverage EV aggregation for the provision of services to prosumers, which can enhance stability and reduce costs of purchasing physical storage facilities from the prosumers' side \cite{mignoni2023noncooperative, Zanvettor_Fochesato_2024}. At the same time, recent advances in optimization under uncertainty can assist parking lot managers with making better-informed decisions.  Though early studies have shown that fleets of EVs can emulate dispatchable resources when state-of-charge dynamics and availability are respected,  most rely on deterministic or parametric uncertainty models that do not take into account day-to-day variability in arrivals, departures, and user preferences \cite{Sevdari2022,Zanvettor2022}. This motivates the recent shift towards data-driven, distribution-free formulations with explicit finite-sample guarantees for out-of-sample feasibility and performance \cite{data-driven-uc,robust-ev-scheduling}. 

Scenario optimization allows solving semi-infinite robust optimization problems by approximating them with a tractable program built from sampled scenarios \cite{scenario-tutorial}, \cite{CalafioreCampi2005}, \cite{CampiGaratti2008exact}. Standard results provide \emph{a priori} bounds on violation probabilities in terms of decision dimension and sample size, while more recent \emph{a posteriori} bounds leverage the number of support constraints to tighten the provided guarantees \cite{wait-and-judge,posteriori-support}.  Providing guarantees for EV charging scheduling in parking lots, leveraging the scenario approach can be found in
\cite{Paccagnan2019, Pantazis2020_cdc, Pantazis2022, Fele2019, pantazis2024apriori} where uncertain operating constraints or costs are considered. However, such solutions do not consider EV parking lots interconnected with retailers and prosumer communities and mainly focus on EV charging scheduling.  In papers that focus more on the EV parking lot management for community services, models for parking lot arbitrage and flexibility scheduling have often imposed fixed capacity envelopes or known departure processes \cite{Zanvettor2022,lot-flex-uncertain-departures}. Other methods may risk infeasibility or exhibit excessive conservatism. Only a few works model parking lots as virtual energy storage services. Specifically, \cite{Chandra2024_CoordinatedEVFleets} designs a three-stage energy management system that coordinates EV charging of fleets to maximize community benefits and operational efficiency.  The works \cite{Zheng2023} and \cite{TostadoVeliz2025} focus mainly on the market participation of the EV parking lots, e.g., the interaction between EV storage services and retailers. 

This paper studies virtual energy storage services provided through the aggregated management of EV batteries in parking lots. In this setting, three challenges naturally arise. First, the virtual storage service must be modelled at the interface of multiple actors, namely,  prosumer communities, a parking lot manager, and a retailer, while accounting for uncertain EV arrivals, departures, and user behaviour. Second, the parking lot manager must make economically meaningful decisions without sacrificing operational safety, which requires a principled and explicitly tunable trade-off between performance and constraint satisfaction. Third, the data used to characterize EV behaviour may be noisy, adversarially perturbed, or statistically shifted with respect to future operating conditions, so guarantees based solely on standard stochastic methods may not be reliable after deployment. To address these challenges we develop a methodology with the scenario approach as its backbone \cite{Campi_2020_scenario_relaxation}, \cite{JMLR_Campi}, \cite{Campi2025_DRO}. Specifically, our contributions with respect to the related literature are as follows:
\begin{enumerate}
    \item We formulate a convex model for virtual energy storage provision in which a Parking Lot Manager (PLM) aggregates parked EV batteries into a time-varying storage resource, commits storage services to a prosumer community, and interacts with a retailer through energy transactions. The resulting formulation captures uncertain EV departure losses and time-varying state-of-charge caps in a unified convex optimization framework.
    
    \item We derive a data-driven methodology based on scenario optimization that provides distribution-free out-of-sample guarantees on constraint violations. Moreover, by means of scenario relaxation \cite{Campi_2020_scenario_relaxation}, \cite{JMLR_Campi}, we endow the PLM with a tunable profit--risk mechanism with finite-sample certificates, allowing them to explicitly trade economic performance against robustness in constraint satisfaction.
    
    \item Leveraging the recent results in \cite{Campi2025_DRO}, we extend the virtual energy storage service model to account for adversarial perturbations of the training samples and to Wasserstein distributional ambiguity, thereby obtaining finite-sample robustness certificates against both corrupted data and out-of-distribution uncertainty. This extension allows the same virtual storage formulation to remain meaningful even when the uncertainty distribution at deployment differs from the one represented by the training data.
\end{enumerate}

\textit{Notation}: Sets $\mathbb{R}$ and $\mathbb{R}_{\geq 0}$ denote the real and non-negative real numbers. The $\col$ operator induces the vertical stack of its arguments. The \textit{positive part} operator is defined as $[\cdot]_+ := \max\{0, \cdot\}$, while the \textit{negative part} operator is defined as $[\cdot]_- := -\min\{0,\cdot\}$. Let $\mathcal{K}=\{1,\dots,K\}$ be a discrete time window of $K\in\mathbb{N}$ equally spaced time steps. $\mathfrak{N}(0,1)$ denotes a Gaussian distribution with mean 0 and standard deviation 1. Given a set $\mathcal{A} \subseteq \mathbb{R}^n$, $\mathds{1}_{\mathcal{A}} : \mathbb{R}^n \to \{0,1\}$ is the indicator function associated with $\mathcal{A}$, so that $\mathds{1}_{\mathcal{A}}(x) = 1$ if $x \in \mathcal{A}$ and $0$ otherwise.

\section{Problem Formulation}
\begin{figure}[t]
  \centering
  \includegraphics[width=\linewidth]{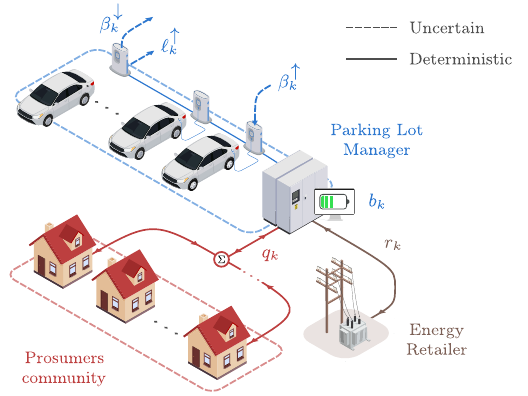} 
  \caption{ \small The parking lot manager (PLM) leverages the available storage of the parked EVs, as agreed with the EV users, to provide virtual energy storage services to a community of prosumers. Furthermore, the PLM is allowed to trade energy with retailers.   }
  \label{fig:plm-model}
\end{figure}

An overview of the considered energy community setup is illustrated in Fig. \ref{fig:plm-model}.
The virtual energy system satisfies the following constraints:
\begin{equation}
\label{eq:ess}
\mathcal{X}_k(b_0, \ell_k, \beta_k) \! = \! \left\{ \! \begin{bmatrix} r_k \\ b_k \end{bmatrix} \! \! \in \! \mathbb{R}^2 \!: \! \begin{matrix} b_k \! = \! b_{k-1}+q_k+r_k \! - \! \ell_k \\ b_k \! \in \! [0,\beta_k], \ |r_k| \! \leq \! r_{\max}
\end{matrix} \! \right\}
\end{equation}

Here, $q_k\in\mathbb{R}$ denotes the energy exchange as requested by the prosumer, which has to be met by the PLM. In particular, $q_k > 0$ ($q_k < 0$) denotes energy that the prosumers injected into (withdraw from) the virtual storage. Term $\ell_k\in \mathcal{L} \subset \mathbb{R}_{\geq 0}$ denotes the state of charge
losses due to, e.g., EVs leaving the parking lot, or operational faults. Term $\beta_k \in \mathcal{B} \subset \mathbb{R}_{\geq 0}$ is the time-varying capacity: incoming EVs increase the battery capacity, while leaving EVs reduce it. Finally, $r_k\in\mathbb{R}$ is the
PLM’s decision to buy from ($r_k > 0$) or sell ($r_k < 0$) energy to the retailer.
The need for $r_k$ serves two purposes: i) it allows the PLM to compensate state-of-charge drops due to EVs leaving, and ii) it provides a means for energy arbitrage. The second aspect is not strictly related to virtual storage markets, i.e., one could set $r_k \geq 0$. Nonetheless, we allow it for the sake of generality. 

We denote the collection of the virtual state of charge over the horizon $k \in \mathcal{K}$ by $\mathbf{b}=\col(b_k)_{ k \in \mathcal{K}}$ and $\mathbf{r}=\col(r_k)_{k \in \mathcal{K}}$, respectively.
Being an economic actor, the PLM is interested in minimizing the sustained costs, i.e., maximizing its revenues. The objective $J : \mathbb{R}^K \to \mathbb{R}$ denotes such quantity, defined as 
\begin{equation}
    \label{eq:objective}
    J(\mathbf{r}) = \sum_{k\in\mathcal{K}} \!\left(\pi_k^+[r_k]_+ + \pi_k^-[r_k]_{-}\right)
\end{equation}
where $\pi_k^+,\pi_k^- \in \mathbb{R}_{\geq 0}$ are the retailer's selling and buying price, respectively. Note that the first sum term in \eqref{eq:objective} represents the costs sustained for buying energy from the retailer, while the second represents the revenues coming from arbitrage. 
\begin{assumption}
    \label{assum:energy-prices}
    For all time steps $k\in\mathcal{K}$, the buying price is larger than the selling price, i.e., $\pi_k^+\ge \pi_k^-$ \cite{Wei2015Energy}.  \hfill $\square$
\end{assumption}
The scheduling problem of the PLM then takes the form:
\begin{equation}
        \label{eq:PLM-base}
	\minimize_{\mathbf{x}} \ J(\mathbf{r}) \ \st \ \mathbf{x}_k \in \mathcal{X}_k(b_0, \ell_k, \beta_k), \ \forall k \in \mathcal{K}
\end{equation}
where $\mathbf{x}_k = [r_k, b_k]$, so that $\mathbf{x} = \col(\mathbf{x}_k)_{k \in \mathcal{K}}$. 

In practical terms, (\ref{eq:PLM-base}) is the PLM's baseline scheduling problem, i.e., it decides how much energy to trade with the retailer and how much energy to keep in the virtual battery so as to satisfy the prosumers' request without violating storage limits.

\begin{remark}
    \label{rem:convex}
    Based on Assumption \ref{assum:energy-prices}, the objective $J(\cdot)$ is convex, and linear when $\pi^+_k = \pi^-_k$, for all $k \in \mathcal{K}$. Moreover, since the constraints in $\mathcal{X}_k(b_0, \ell_k, \beta_k)$ are affine, for all $k \in \mathcal{K}$, the problem in \eqref{eq:PLM-base} is convex. 
\end{remark}
 
We consider a general setting, where the loss vector associated with EV departures $\boldsymbol{\ell} = \col(\ell_k)_{k \in \mathcal{K}}$ and the upper bound on the virtual state of charge $\boldsymbol{\beta} = \col(\beta_k)_{k \in \mathcal{K}}$ are uncertain. Since it is generally challenging to identify the underlying distribution (if it exists) of both parameters, an approach to deal with this problem is by adopting a data-driven perspective and obtaining samples from previous values of $\beta_k$, $\ell_k$ obtained either from real measurements or data produced from a synthetic model. To this end, let $\beta^{(i)}_k$ and $\ell^{(i)}_k$ correspond to the $i$-th sample at time $k$ from a collection of samples $\mathcal{N} = \{1,\dots,N\}$ and let $\tilde{\mathcal{X}}_k \subset \mathbb{R}^2$ be the SoC-relaxed counterpart of \eqref{eq:ess}, i.e., 
\begin{equation}
\tilde{\mathcal{X}}_k(b_0, \ell_k, \beta_k) \! = \! \left\{ \! \begin{bmatrix} r_k \\ b_k \end{bmatrix} \! \! \in \! \mathbb{R}^2 \!: \! \begin{matrix} b_k \! \geq \! b_{k-1}+q_k+r_k \! - \! \ell_k \\ b_k \in [0,\beta_k], \ |r_k| \! \leq \! r_{\max}
\end{matrix} \! \right\}
\end{equation}
We can then define the following scenario approximation of the original problem:
\begin{equation}
    \label{eq:PLM-N-prime}
    \begin{aligned}
    \minimize_{\mathbf{u}, \mathbf{x}} J(\mathbf{r}) \
    \st \ &\mathbf{x}_k \in \tilde{\mathcal{X}}_k\left(b_0, u_k, \beta^{(i)}_k\right), \ u_k \geq \ell^{(i)}_k, \\ 
    &  \ \forall k \in \mathcal{K}, \forall i \in \mathcal{N},
    \end{aligned}
\end{equation}
where $u_k \in \mathbb{R}_{\geq 0}$, with $\mathbf{u} = \col(u_k)_{k \in \mathcal{K}}$ is an auxiliary decision variable. Practically, (\ref{eq:PLM-N-prime}) tells the PLM to schedule retailer trades and a protective energy buffer so that the promised virtual storage service remains feasible for all observed EV departure losses and capacity limits in the training data.

Note that the data trajectories are considered as an independent and identically distributed (i.i.d.) sample vector from an unknown probability distribution $\mathbb{P}$. This is a reasonable assumption, as in practice the pattern of vehicle departures on a given day shows very little correlation with departures on the same weekday in subsequent weeks or even in the same period of the following year.
However, correlations between components of the data trajectory can be taken into account without violating the i.i.d. assumption.
Given the program above, the probability of violation of the PLM constraints is 
\begin{equation} 
    \label{prob_viol}
    \mathbb{V}(\mathbf{r}, \mathbf{b}) := \left\{ \mathbb{P}(\tilde{\boldsymbol{\ell}}, \tilde{\boldsymbol{\beta}}): \ \exists k \in \mathcal{K} \text{ s.t. } \mathbf{x}_k \notin \tilde{\mathcal{X}}_k(b_0, \tilde{\ell}_k, \tilde{\beta}_k) \  \right\}
\end{equation}
where $\tilde{\boldsymbol{\ell}} = \col(\tilde{\ell}_k)_{k \in \mathcal{K}}$ and $\tilde{\boldsymbol{\beta}} = \col(\tilde{\beta}_k)_{k \in \mathcal{K}}$, quantifying the probability that the PLM's decision $\mathbf{r},\mathbf{b}$ will violate constraints for future yet unseen data trajectories $\boldsymbol{\tilde{\ell}}$, $\tilde{\boldsymbol{\beta}}$. From the PLM's viewpoint, (\ref{prob_viol}) measures how often a schedule that looks feasible on the training data may fail in future operation when EV departure losses or capacity limits differ from the observed scenarios.

\section{Robust Virtual Energy Storage Services}
Based on Remark \ref{rem:convex}, the problem in \eqref{eq:PLM-N-prime} is convex. Assuming feasibility, convexity alone does not ensure the uniqueness of the minimizer. To this end, we impose the following standing assumption.

\begin{assumption}[Uniqueness] \label{assum:uniqueness}
For any number of samples $N \in \mathbb{N}$ and for every sample $\boldsymbol{\ell}^{(i)} = \col(\ell^{(i)}_k)_{k \in \mathcal{K}}$ and $\boldsymbol{\beta}^{(i)} = \col(\beta^{(i)}_k)_{k \in \mathcal{K}}$, with $i \in \mathcal{N}$, the solution is unique, e.g., singled out using a convex tie-break rule \cite{posteriori-support}. 
\end{assumption}
We denote the unique optimal solution of \eqref{eq:PLM-N-prime} as $(\mathbf{x}^*_{\mathcal{N}}, \mathbf{u}^*_{\mathcal{N}})$. When computed, the PLM is interested in knowing what safety guarantees it admits against unseen uncertainties. Such certificates are fundamentally connected with the concept of support constraints/support samples. Considering the reformulation \eqref{eq:PLM-N-prime}, each sample $i \in \mathcal{N}$ gives rise to the randomized constraint:
\begin{align}
    \mathcal{C}_i = \Biggl\{ \begin{bmatrix} \mathbf{x} \\ \mathbf{u} \end{bmatrix} \in \mathbb{R}^{3K} : & \ \mathbf{x}_k \in \tilde{\mathcal{X}}_k(b_0, \ell^{(i)}_k, \beta^{(i)}_k), \nonumber \\
   & u_k \geq \ell^{(i)}_k, b_k \leq \beta_k^{(i)}, \forall k \in \mathcal{K} \Biggr\}.
\end{align}
\begin{definition}[Support constraints/ samples] 
\label{def:support_constraints}
\vspace{0.2cm}
A constraint $\mathcal{C}_i$, with $i \in \mathcal{N}$, is a \textit{support constraint} if its removal changes the optimal solution, i.e., $(\mathbf{x}^*_{\mathcal{N}}, \mathbf{u}_{\mathcal{N}}^*) \neq (\mathbf{x}_{\mathcal{N} \setminus \{i\}}, \mathbf{u}_{\mathcal{N} \setminus \{i\}})$, where the right-hand side denotes the optimal solution obtained after removing the constraint $\mathcal{C}_i$ from program \eqref{eq:PLM-N-prime}. The sample that corresponds to a support constraint $\mathcal{C}_i$ is called a support sample.
\end{definition}

Support constraints/samples encode which samples of the data are required to reconstruct the optimal solution. However, there can be ill-defined cases where multiple copies of the same constraint accumulate on top of each other. To exclude such degenerate cases, we impose the following.
\begin{assumption} \label{assum:nondegeneracy} (Non-degeneracy \cite{CampiGaratti2008exact})
For every sample $(\tilde{\boldsymbol{\ell}}, \tilde{\boldsymbol{\beta}})$ of size $N$, the solution $(\mathbf{x}^*_\mathcal{N}, \mathbf{u}^*_\mathcal{N})$ coincides with probability $1$ with the solution that is obtained after eliminating all the constraints that are not of support.
\end{assumption}

Assumption \ref{assum:nondegeneracy} ensures that there are no multiple copies of the same constraint for different samples, since if that were the case, the solutions calculated only using the support constraints would differ from the original randomized solution. 
Consider now the probability of violation $\mathbb{V}(\mathbf{r}^*_\mathcal{N}, \mathbf{b}^*_\mathcal{N})$ and the cardinality of support constraints or support samples defined as: 
\begin{equation}
    \label{eq:support-constraint-cardinality}
    s_N=|\{i \in \mathcal{N}: \mathcal{C}_i \text{ is a support constraint}\}|.
\end{equation} 
Then, the following result  provides \emph{a priori} robustness certificates for constraint satisfaction:
\begin{lemma} 
    \label{lemma:apriori}
    Consider Assumptions 1, 2, and 3, the data-driven program \eqref{eq:PLM-N-prime} with $N > 2K$. Then, the following holds: 
    \begin{equation}
	\mathbb{P}^N\!\left\{ \mathbb{V}(\mathbf{r}_N^*, \mathbf{b}_N^*) \leq \varepsilon \right\} \geq 1 - \underbrace{\sum_{i=0}^{2K-1} \binom{N}{i} \varepsilon^i (1 - \varepsilon)^{N-i}}_{=:\delta} \label{eq:beta_formula} 
    \end{equation}
    with $\varepsilon \in [0,1]$ being a violation level upper bound set by the PLM.
\end{lemma}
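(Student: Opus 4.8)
The plan is to reduce Lemma~\ref{lemma:apriori} to the classical \emph{a priori} feasibility bound of the scenario approach \cite{CampiGaratti2008exact}, which states that for a feasible convex scenario program in $d$ decision variables with a unique minimizer $x_N^*$ built from $N$ i.i.d.\ scenarios, $\mathbb{P}^N\{\mathbb{V}(x_N^*)>\varepsilon\}\le \sum_{i=0}^{d-1}\binom{N}{i}\varepsilon^i(1-\varepsilon)^{N-i}$ for every $\varepsilon\in[0,1]$. Comparing this with \eqref{eq:beta_formula}, the only real content of the lemma is identifying the \emph{effective} decision dimension as $d=2K$ rather than $3K$, which is also what makes the bound non-vacuous precisely when $N>2K$.

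First I would work with the reformulation \eqref{eq:PLM-N-prime}, which is convex by Remark~\ref{rem:convex}, and note that the randomized constraints $\mathcal{C}_i$ (equivalently \eqref{eq:uk_lower} together with the cap part of \eqref{eq:b_bounds}) involve only the $2K$ coordinates $(\mathbf{b},\mathbf{u})$, while $\mathbf{r}$ enters solely through the deterministic relations \eqref{eq:balance}, \eqref{eq:r_max} and the objective $J(\mathbf{r})$. Partially minimizing over $\mathbf{r}$, i.e.\ replacing $J$ by $\phi(\mathbf{b},\mathbf{u}):=\min\{J(\mathbf{r}) : b_k\ge b_{k-1}+q_k+r_k-u_k,\ |r_k|\le r_{\max},\ \forall k\in\mathcal{K}\}$, preserves convexity (partial minimization of a jointly convex function over variables appearing only in affine constraints). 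Hence \eqref{eq:PLM-N-prime} is equivalent to a convex scenario program over $(\mathbf{b},\mathbf{u})\in\mathbb{R}^{2K}$ with the same optimizer $(\mathbf{b}_N^*,\mathbf{u}_N^*)$ and the same family of support constraints. Feasibility is the standing assumption, uniqueness is Assumption~\ref{assum:uniqueness} (or is enforced by a convex tie-break rule \cite{posteriori-support}), so the hypotheses of the classical theorem hold with $d=2K$; applying it and setting $\delta$ as in \eqref{eq:beta_formula} yields the stated inequality.

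What still needs care is matching the violation events: for a fresh draw $(\tilde{\boldsymbol{\ell}},\tilde{\boldsymbol{\beta}})$, the out-of-sample infeasibility of the uncertain constraint family at $(\mathbf{b}_N^*,\mathbf{u}_N^*)$ is, by construction, the event that some instance of \eqref{eq:uk_lower}–\eqref{eq:b_bounds} fails, and translating this back through the equivalence \eqref{eq:PLM-N}$\leftrightarrow$\eqref{eq:PLM-N-prime} (using that $\max_{i}\ell_k^{(i)}$, resp.\ $u_k^\ast$, encodes the binding loss value at each $k$) must be shown to coincide with the set in \eqref{prob_viol}, i.e.\ with $\mathbb{V}(\mathbf{r}_N^*,\mathbf{b}_N^*)$. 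I expect this bookkeeping — together with verifying that eliminating $\mathbf{r}$ alters neither the solution set, nor uniqueness, nor the support structure — to be the main (and essentially the only nontrivial) step; once it is in place, the claim is a direct instantiation of the standard scenario bound, with Assumptions~\ref{assum:uniqueness} and \ref{assum:nondegeneracy} supplying exactly the regularity the classical result requires.
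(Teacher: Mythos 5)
Your argument is correct in substance and rests on the same structural observation as the paper's proof --- namely that the randomized constraints \eqref{eq:uk_lower}--\eqref{eq:b_bounds} only touch the $2K$ coordinates $(\mathbf{b},\mathbf{u})$, while $\mathbf{r}$ enters only through deterministic constraints and the cost --- but you implement it by a genuinely different mechanism. The paper never eliminates $\mathbf{r}$: it keeps the full $3K$-dimensional program \eqref{eq:PLM-N-prime}, observes that the feasible region generated by the uncertain constraints has $K$ unconstrained directions, and invokes the \emph{support rank} machinery of \cite[Def.~3.3, Lemma~3.8, Thm.~4.1]{SchildbachFagianoMorari2013} with $d_{sr}=2K$ to get \eqref{eq:beta_formula} directly, together with the same event-matching step $\{\exists k: u_k^*<\tilde\ell_k \text{ or } b_k^*>\tilde\beta_k\}$ that you flag as the bookkeeping to verify against \eqref{prob_viol}. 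Your route instead projects out $\mathbf{r}$ by partial minimization and then applies the classical bound of \cite{CampiGaratti2008exact} in dimension $2K$; this is more elementary (no support-rank apparatus) and makes the ``effective dimension'' claim very transparent, but it carries two small obligations you should discharge explicitly: (i) the classical theorem is stated for a \emph{linear} objective, whereas your reduced objective $\phi(\mathbf{b},\mathbf{u})$ is convex piecewise affine, so you must either argue that a convex program with convex objective and unique minimizer still has at most $d=2K$ support constraints (a Helly-type argument on the strict sublevel set of $\phi$ at the optimum does this), or epigraph $\phi$ --- in which case the dimension naively becomes $2K+1$ and you would need precisely the support-rank-style observation that the epigraph variable is untouched by the uncertainty to avoid weakening \eqref{eq:beta_formula} to a sum up to $2K$; and (ii) $\phi$ takes the value $+\infty$ where the inner problem in $\mathbf{r}$ is infeasible, which is harmless because that region is a deterministic polyhedron, but it should be folded into the deterministic constraint set rather than left implicit. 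Your checks that partial minimization preserves the optimizer, its uniqueness (Assumption~\ref{assum:uniqueness}), and the support structure are exactly the right ones, and with point (i) settled your proof delivers the stated bound by a different key lemma than the one the paper uses.
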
 
\begin{proof}
    The reformulation \eqref{eq:PLM-N-prime} has two uncertain constraints, i.e., $b_k \leq \beta^{(i)}_k$ and $u_k \geq \max_{i \in \mathcal{N}} \ell^{(i)}_k$. Note that in \eqref{eq:PLM-N-prime}, there are $K$ unconstrained directions in the feasible space constructed by such constraints. As such, based on \cite[Def. 3.3]{SchildbachFagianoMorari2013}, we can replace the original bound on the total dimension with the support rank $d_{sr} = 2K$ and apply \cite[Lemma 3.8]{SchildbachFagianoMorari2013} and \cite[Thm. 4.1]{SchildbachFagianoMorari2013} on problem \eqref{eq:PLM-N-prime} to obtain:
    \begin{align*}
        \mathbb{P}^N \!\! \left\{\mathbb{V}(\mathbf{r}^*_\mathcal{N}, \mathbf{b}^*_\mathcal{N}) \leq \epsilon \right\} & = \mathbb{P}^N \left\{\mathbb{P}\left\{(\tilde{\boldsymbol{\ell}}, \tilde{\boldsymbol{\beta}}) : \begin{matrix} \exists k \in \mathcal{K}: \\ u^*_k < \tilde{\ell}_k \\ b^*_k > \tilde{\beta}_k \end{matrix} \right\} \leq \epsilon \right\} \\ &\geq 1 - \delta 
    \end{align*}
    where $\delta$ satisfies (\ref{eq:beta_formula}).
\end{proof}

As such, the PLM ensures that, with high confidence at least $1-\delta$, the probability that the state-of-charge of the virtual battery will be enough for future EV departures and state of charge caps is bounded by a violation level $\epsilon$ as stated in \eqref{eq:beta_formula}. Note that for a longer horizon $K$, the bound on $\epsilon$ becomes looser. This implies that, for planning further into the future,  the PLM requires a larger data size $N$ to ensure the safety of the constraint up to this violation level. 
The result of Lemma \ref{lemma:apriori} is \textit{a priori} in the sense that to guarantee these safety margins, the PLM is not required to know the samples beforehand. However, \emph{a priori} results often correspond to a worst-case bound and are thus conservative.

\subsection{Profit vs Risk in EV Parking Lot Management}
The results established in the previous subsection do not take into account the more general setting where the PLM decides for itself on the trade-off between constraints' satisfaction and profit. We grant this flexibility for the PLM by extending the problem in \eqref{eq:PLM-N-prime} as follows:
\begin{equation}
    \label{eq:PLM-N-rho-prime}
    \begin{aligned}
	\minimize_{\mathbf{u}, \boldsymbol{\xi}, \mathbf{x}} & \ J(\mathbf{r}) + \rho \sum_{i \in \mathcal{N}} \xi_i \\ 
	\st & \ \mathbf{x}_k \in \tilde{\mathcal{X}}_k\left(b_0, u_k - \xi_i, \xi_i + \beta^{(i)}_k\right), \ u_k \geq \ell^{(i)}_k,  \\ 
    & \ \forall k \in \mathcal{K}, \forall i \in \mathcal{N}
    \end{aligned}
\end{equation}
with $\boldsymbol{\xi} = \col(\xi_i)_{i \in \mathcal{N}} \in \mathbb{R}^N_{\geq 0}$.
From the PLM's perspective, (\ref{eq:PLM-N-rho-prime}) allows small, penalized violations of the training scenarios so that it can purposefully trade a degree of operational safety for lower cost or higher profit.
In this setting, the PLM has an additional penalty term in the cost function that is responsible for tightening or relaxing the safety constraints depending on the value of the weight $\rho \in \mathbb{R}_{\geq 0}$. Note that \eqref{eq:PLM-N-rho-prime} $\rightarrow$ \eqref{eq:PLM-N-prime} as $\rho \rightarrow \infty$. A positive $\xi_i$ generates the \textit{regret} associated with non-exact satisfaction of the associated $i$-th constraint of program \eqref{eq:PLM-N-rho-prime}.

Given that the data on vehicle arrivals and departures and the virtual state of charge bounds are available to the PLM, one can indeed obtain (possibly) tighter guarantees by leveraging the so-called \emph{a posteriori} bounds \cite{posteriori-support}, \cite{Campi_2020_scenario_relaxation}, \cite{JMLR_Campi}. To this end, we impose the following assumption:  
\begin{assumption}[Non-accumulation] 
    \label{assum:nonaccum}
    For every decision $(\mathbf{r}, \mathbf{b}, \mathbf{u})$ we have that: 
    \begin{equation}
        \mathbb{P}[ \exists k \in \mathcal{K} \mid b_{k}=b_{k-1}+q_k+ r_k-\ell_k \text{ or } b_k=\beta_k]=0
    \end{equation}
\end{assumption}
Assumption \ref{assum:nonaccum} avoids degenerate cases, where different samples lead to constraints that overlap at the solution. 
Choosing different weight parameters $\rho$, the PLM can assess the trade-off between performance and the state-of-charge constraints violation based on the following result, based on the scenario optimization with relaxation \cite{Campi_2020_scenario_relaxation}, \cite{JMLR_Campi}: 
\begin{proposition} 
    \label{prop:relaxation}
    Consider Assumptions \ref{assum:energy-prices}, \ref{assum:uniqueness} and \ref{assum:nonaccum}. Let $(\mathbf{u}_{ \rho}^*, \boldsymbol{\xi}^*_{\rho}, \mathbf{x}^*_{\rho})$ be the solution of \eqref{eq:PLM-N-rho-prime}.
    Given a confidence parameter $\delta \in (0,1)$, for any $m = 0, \dots, N-1$, $m < N$, consider the polynomial equation 
    \begin{multline} 
        \label{cor:apost_equation}
        \binom{N}{m} t^{\,N-m}
        - \frac{\delta}{2N} \sum_{i=m}^{N-1} \binom{i}{m} t^{\,i-m} \\
        - \frac{\delta}{6N} \sum_{i=N+1}^{4N} \binom{i}{m} t^{\,i-m}
        = 0 
    \end{multline}
    with respect to $t$.
    For $m = N$, consider instead the polynomial equation
    \begin{equation}
    1 - \frac{\delta}{6N} \sum_{i=N+1}^{4N} \binom{i}{N} t^{\,i-N} = 0. \label{cor:apost_equation2} 
    \end{equation}
    For any $m = 0, \dots, N-1$, equation (\ref{cor:apost_equation}) has exactly two solutions in 
    $[0, +\infty)$, which we denote by $t^{(m)}$ and $\bar{t}^{(m)}$, with 
    $t^{(m)} \leq \bar{t}^{(m)}$.  
    Instead, equation (\ref{cor:apost_equation2}) has only one solution in $[0, +\infty)$, which we denote 
    by $\bar{t}^{(N)}$, while we define $t^{(N)} := 0$. Let $\underline{\epsilon}(m) := \max\{0, 1 - t^{(m)}\},  
    \overline{\epsilon}(m) := 1 - \bar{t}^{(m)}$, for $m = 0,\dots,N$. Then, with confidence at least $1-\delta$, it holds that
    \begin{equation}
    \underline{\epsilon}(\tilde{s}_N^*) 
    \;\leq\; \mathbb{V}(\mathbf{r}_{\rho}^*, \mathbf{b}^*_{ \rho}) 
    \;\leq\; \overline{\epsilon}(\tilde{s}_N^*) \nonumber
    \end{equation}
    where 
    $\tilde{s}_N^*$ is the cardinality of samples $(\boldsymbol{\ell}^{(i)}, \boldsymbol{\beta}^{(i)})$ for which there exists $k \in \mathcal{K}$  such that $b_k^* \leq  b_{k-1}^* + q_k + r_k^* - \ell_k^{(i)}$
     or $b_k^{*} \geq \beta_{k}^{(i)}$.
\end{proposition}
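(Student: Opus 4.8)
The plan is to recognize \eqref{eq:PLM-N-rho}, through its lifted form \eqref{eq:PLM-N-rho-prime}, as a convex scenario program \emph{with constraint relaxation}: each sample $i\in\mathcal N$ contributes the block $\mathcal C_i$ of uncertain inequalities, which is relaxed by the single nonnegative slack $\xi_i$, and every unit of relaxation is priced in the objective via $\rho\sum_{i\in\mathcal N}\xi_i$. This is precisely the template of the \emph{a posteriori} (``wait-and-judge'' \cite{wait-and-judge}) theory for regularized/soft-constrained scenario optimization of \cite{GarattiCampi2008RiskComplexity}, whose certificates are distribution-free and depend only on $N$, $\delta$ and the \emph{observed complexity} of the computed solution, not on the ambient dimension $3K+N$; this dimension-independence is why, unlike in Lemma \ref{lemma:apriori}, no support-rank bookkeeping is needed here. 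The first step is therefore to put the problem in that form and to note that, by Remark \ref{rem:convex}, \eqref{eq:PLM-N-rho-prime} has a convex objective and affine constraints, hence is a convex scenario program, and that the $(\mathbf r,\mathbf b)$-block of its optimizer coincides with $(\mathbf r^*_\rho,\mathbf b^*_\rho)$.

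Next I would verify the hypotheses of that theorem. Existence and uniqueness of the scenario minimizer follow from Assumption \ref{assum:uniqueness} (a convex tie-break rule \cite{posteriori-support} being invoked if several optimizers exist), while the non-degeneracy required by the theorem is supplied by Assumption \ref{assum:nonaccum}, which forbids accumulation of distinct samples' constraints at the optimum. The decisive point is to identify the abstract \emph{complexity} of \cite{GarattiCampi2008RiskComplexity} --- the cardinality of the smallest subset of samples that, solved in isolation, reproduces the optimizer $(\mathbf r^*_\rho,\mathbf b^*_\rho,\mathbf u^*_\rho,\boldsymbol\xi^*)$ --- with $\tilde s_N^*$. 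For a soft-constrained convex program this complexity equals the number of samples whose block $\mathcal C_i$ is \emph{active or violated} at the optimum, i.e.\ those with $\xi_i^*>0$ together with those for which $\xi_i^*=0$ but one of the two uncertain inequalities of $\mathcal C_i$ is tight; Assumption \ref{assum:nonaccum} then guarantees that every such sample is a genuine support sample and that no other sample is. This is exactly the set counted by $\tilde s_N^*$ in the statement, namely the samples with $b^*_k\le b^*_{k-1}+q_k+r^*_k-\ell^*_k$ or $b^{(i)}_k\ge\beta^{(i)}_k$. Making this identification rigorous --- a KKT/support-structure argument for the relaxed program combined with a careful use of the non-degeneracy assumption --- is the step I expect to be the main obstacle.

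With the complexity pinned down to $\tilde s_N^*$, the conclusion is a direct application of the \emph{a posteriori} theorem of \cite{GarattiCampi2008RiskComplexity}. For the chosen $\delta$ one forms, for each $m=0,\dots,N-1$, the polynomial in \eqref{cor:apost_equation} and, for $m=N$, the polynomial in \eqref{cor:apost_equation2}; the sign and monotonicity analysis of these polynomials on $[0,+\infty)$ carried out in \cite{GarattiCampi2008RiskComplexity} shows that \eqref{cor:apost_equation} has exactly two nonnegative roots $t^{(m)}\le\bar t^{(m)}$ and \eqref{cor:apost_equation2} exactly one nonnegative root $\bar t^{(N)}$, so that $\underline\epsilon(m)=\max\{0,1-t^{(m)}\}$ and $\overline\epsilon(m)=1-\bar t^{(m)}$ are well defined. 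The theorem then yields that, with $\mathbb P^N$-probability at least $1-\delta$, the true risk of the computed solution lies in $[\underline\epsilon(\tilde s_N^*),\overline\epsilon(\tilde s_N^*)]$. It remains only to observe that this risk coincides with the probability of violation \eqref{prob_viol} evaluated at $(\mathbf r^*_\rho,\mathbf b^*_\rho)$ --- the probability that a fresh draw $(\tilde{\boldsymbol\ell},\tilde{\boldsymbol\beta})$ makes the state-of-charge balance or the capacity cap fail --- which is $V(\mathbf r^*_\rho,\mathbf b^*_\rho)$; this closes the proof.
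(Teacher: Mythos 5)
Your proposal follows essentially the same route as the paper's proof: pass to the lifted relaxed program \eqref{eq:PLM-N-rho-prime}, observe that its violation event coincides with $\mathbb{V}(\mathbf{r}^*_\rho,\mathbf{b}^*_\rho)$, identify the complexity with the count $\tilde{s}_N^*$ of active-or-violated samples under Assumptions \ref{assum:uniqueness} and \ref{assum:nonaccum}, and invoke the a posteriori risk--complexity certificate for relaxed scenario programs (the paper cites Theorem 2 of \cite{posteriori-support}, the same family of results yielding exactly the polynomial equations in the statement). The identification you flag as the main obstacle is precisely what that theorem packages, so your argument is correct and matches the paper's.
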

\begin{proof} 
Consider the unique optimizer of \eqref{eq:PLM-N-rho-prime}. Then, the following equality holds:
   \begin{align*}
        &\mathbb{P}^N \!\! \left\{\mathbb{V}(\mathbf{r}^*_\rho, \mathbf{b}^*_\rho) \in  [\underline{\epsilon} (\tilde{s}^\ast_N), \overline{\epsilon} (\tilde{s}^\ast_N)]\right\} \\
        &= \mathbb{P}^N \left\{\mathbb{P}\left\{(\tilde{\boldsymbol{\ell}}, \tilde{\boldsymbol{\beta}}) : \begin{matrix} \exists k \in \mathcal{K}: \\ u^*_{k, \rho} < \tilde{\ell}_k \text{ or} \\ b^*_{k, \rho} > \tilde{\beta}_k  \end{matrix} \right\} \in [\underline{\epsilon} (\tilde{s}^\ast_N), \overline{\epsilon} (\tilde{s}^\ast_N)] \right\}. 
    \end{align*}
    Calculating $\tilde{s}^\ast_N$ 
    for \eqref{eq:PLM-N-rho-prime} and applying Theorem 2 in \cite{posteriori-support} concludes the proof.
\end{proof}
For the PLM, this means that the observed training sample can be used to derive a tighter, \emph{a posteriori} estimate of the true violation risk, thereby quantifying the profit/risk trade-off induced by the relaxation parameter $\rho$. Note that in Proposition \ref{prop:relaxation}, the constraints that are important and affect the quality of the generalization guarantees are the active or violating constraints depicted by the cardinality $\tilde{s}_N^*$. The smaller the cardinality  $\tilde{s}_N^*$, the tighter the lower and upper violation levels on the true probability risk. 

In practice, the data used by the PLM can be subject to manipulations either by noise or by adversarial entities. With the advent of cyber-physical systems, cyber-attacks on subsystems of the electricity grid have become increasingly more common. Furthermore, even after appropriate pre-processing, the PLM might not be sure how much trust to put into their data. In the following Section, we aim to address this issue by proposing a distributionally and adversarially robust virtual energy storage methodology for the PLM with tunable guarantees.

\section{Distributionally Robust Virtual Storage Services}
In the first part of this Section, we design the PLM such that it is robust against adversarial or noisy changes in the data. Specifically, following the approach in \cite{Campi2025_DRO}, we consider the case where the PLM  may trust the data up to a certain threshold that defines a \textit{trust-region}. Specifically, we consider that each sample $(\boldsymbol{\ell}, \boldsymbol{\beta})$ lies within an adversarial region $\mathcal{A}_{\boldsymbol{\ell}, \boldsymbol{\beta}} \subseteq \mathbb{R}^{2K}$ defined as 
\begin{align} 
    \mathcal{A}_{\boldsymbol{\ell}, \boldsymbol{
    \beta}}=\{(\boldsymbol{\ell}+\Delta \boldsymbol{\ell}, \boldsymbol{\beta} + \Delta\boldsymbol{\beta}): (\Delta \boldsymbol{\ell}, \Delta\boldsymbol{\beta})  \in \mathcal{A} \}, 
\end{align}
where $\mathcal{A} \subset \mathbb{R}^{2K}$ is considered to be a set of possible data deviations considered by the PLM as design choices. As data perturbations can alter the resulting decision and thus its associated robustness certificates, the notion of the probability of violation of the virtual SoC constraints has to be redefined to account for a risk function robust against not only the drawn sample but also a region around it. If mistrust is high, the region is larger, while if the PLM trusts the measurements, the region is smaller. 
Consider the adversarial region $\mathcal{A}_{\boldsymbol{\ell}, \boldsymbol{\beta}}$ around sample $(\boldsymbol{\ell}, \boldsymbol{\beta})$ and that $(\mathbf{r}, \mathbf{b})$ is the PLM's decision. Then, the risk measure is defined as 
\begin{multline}
\mathbb{V}_A(\mathbf{r},\mathbf{b}) 
= \mathbb{P}\Big\{ (\boldsymbol{\ell}, \boldsymbol{\beta}) : 
\exists (\tilde{\boldsymbol{\ell}}, \tilde{\boldsymbol{\beta}}) \in \mathcal{A}_{\boldsymbol{\ell}, \boldsymbol{\beta}}, k \in \mathcal{K} : \\
 b_k < b_{k-1} + q_k + r_k - \tilde{\ell}_k \text{ or } b_k > \tilde{\beta}_k  
\Big\} \nonumber
\end{multline}
is called the \textit{adversarial probability of violation}.
Such a risk measure does not consider only the particular points to account for violations, but an entire set of perturbations around the data belonging to $\mathcal{A}_{\boldsymbol{\ell}, \boldsymbol{\beta}}$. 
In this paper, we consider a finite approximation of $\mathcal{A}_{\boldsymbol{\ell}, \boldsymbol{\beta}}$ obtained as the convex hull of $M \in \mathbb{N}$ points in $\mathcal{A}$ and denoted by $\hat{\mathcal{A}} \subseteq \mathcal{A}$. For ease of notation, we denote $\mathcal{M}=\{1, \dots, M\}$. Then, the adversarially robust optimization program that the PLM aims to solve takes the form: 
\begin{equation}
    \label{eq:PLM-N-rho-A-prime}
    \begin{aligned}
	\minimize_{\mathbf{u}, \boldsymbol{\xi}, \mathbf{x}} & \ J(\mathbf{r}) + \rho \sum_{i \in \mathcal{N}} \xi_i \\ 
	\st & \ \mathbf{x}_k \in \tilde{\mathcal{X}}_k\left(b_0, u_k \! - \! \xi_i, \xi_i \! + \! \beta^{(ij)}_k\right), \ u_k \geq \ell^{(ij)}_k, \\ & \ \forall k \in \mathcal{K}, \forall i \in \mathcal{N}, \forall j \in \mathcal{M},
    \end{aligned}
\end{equation}
where $\mathcal{M}$ is the set of points of $\mathcal{A}$ used to approximate $\mathcal{A}_{\boldsymbol{\ell}, \boldsymbol{\beta}}$.  
Using the formulation in (\ref{eq:PLM-N-rho-A-prime}) the PLM can choose a schedule that stays robust not only for the recorded data, but also for nearby corrupted, noisy, or adversarial versions of that data.

Note that the data points of the adversarial set that violate the constraints would correspond to the empirical adversarial risk of the PLM. However, considering only those samples as support samples would not be enough to assess the data-driven decision's out-of-sample performance. To account for potential overfitting issues, we also need to consider the points that lead to active constraints on the boundary of the PLM's feasible set formed by sampled constraints. In accordance with \cite[Def. 5]{Campi2025_DRO}, a sample $(\boldsymbol{\ell}^{(i)}, \boldsymbol{\beta}^{(i)})$ is an \textit{adversarial support sample} or contributes to the \emph{adversarial complexity} for problem \eqref{eq:PLM-N-rho-A-prime} for some time step $k \in \mathcal{K}$ if one of the following conditions holds:
\begin{align*}
    \exists (\tilde{\boldsymbol{\ell}}^{(i)}, \tilde{\boldsymbol{\beta}}^{(i)}) &\in \mathcal{A}_{\boldsymbol{\ell}^{(i)}, \boldsymbol{\beta}^{(i)}}: 
        u^*_{k, \hat{\mathcal{A}}} < \tilde{\ell}^{(i)}_k \text{ or } b^*_{k, \hat{\mathcal{A}}} > \tilde{\beta}^{(i)}_k  \\
    \exists (\tilde{\boldsymbol{\ell}}^{(i,j)}, \tilde{\boldsymbol{\beta}}^{(i,j)}) &\in \mathcal{A}_{\boldsymbol{\ell}^{(i,j)}, \boldsymbol{\beta}^{(i,j)}}: 
        u^*_{k, \hat{\mathcal{A}}} = \tilde{\ell}^{(i,j)}_k \!\! \text{ or } 
        b^*_{k, \hat{\mathcal{A}}} = \tilde{\beta}^{(i,j)}_k \\
    \exists (\tilde{\boldsymbol{\ell}}^{(i,j)}, \tilde{\boldsymbol{\beta}}^{(i,j)}) &\in \mathcal{A}_{\boldsymbol{\ell}^{(i,j)}, \boldsymbol{\beta}^{(i,j)}}: u^*_{k,\hat{\mathcal{A}}} < \tilde{\ell}^{(i,j)}_k \!\! \text{ or } b^*_{k, \hat{\mathcal{A}}} > \tilde{\beta}^{(i,j)}_k
\end{align*}
where $u_{k,\hat{\mathcal{A}}}^*$ denotes the optimal value of the auxiliary variable of \eqref{eq:PLM-N-rho-A-prime}.

\begin{proposition} \label{thrm:adv_guarantees}
    Under Assumptions \ref{assum:energy-prices}, 2 and  \ref{assum:nonaccum} and the condition $\hat{\mathcal{A}}_{\boldsymbol{\ell}, \boldsymbol{\beta}} \subseteq \mathcal{A}_{\boldsymbol{\ell}, \boldsymbol{\beta}}$ for all $(\boldsymbol{\ell}, \boldsymbol{\beta}) \in (\mathcal{L} \times \mathcal{B})^K$, it holds that with high confidence at least $1-\delta$:
    \begin{equation}
        \mathbb{V}_\mathcal{A}(\mathbf{r}^*_{\hat{\mathcal{A}}}, \mathbf{b}^*_{\hat{\mathcal{A}}}) \in \left[\underline{\epsilon}(s^*_{\mathcal{A}, \hat{\mathcal{A}}}), \overline{\epsilon}(s^*_{\mathcal{A}, \hat{\mathcal{A}}}) \right], 
    \end{equation} 
    where $s^*_{\mathcal{A}, \hat{\mathcal{A}}}$ denotes the number of adversarial support samples of \eqref{eq:PLM-N-rho-A-prime} and $\underline{\epsilon}(m)$ and $\overline{\epsilon}(m)$ are obtained by solving the polynomial equations of Proposition \ref{prop:relaxation} and then setting $m=s^*_{\mathcal{A}, \hat{\mathcal{A}}}$.
\end{proposition}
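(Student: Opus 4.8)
The plan is to reduce Proposition \ref{thrm:adv_guarantees} to a direct invocation of the distributionally/adversarially robust scenario result of \cite{Campi2025_DRO}, exactly mirroring how Proposition \ref{prop:relaxation} was reduced to \cite[Thm.~2]{posteriori-support}. First I would pass to the epigraph-style reformulation \eqref{eq:PLM-N-rho-A-prime}, noting (as in the earlier remarks) that the $(\mathbf{r},\mathbf{b})$-components of any optimizer of \eqref{eq:PLM-N-rho-A-prime} coincide with the optimizer of \eqref{eq:PLM-N-rho-A}, so that the adversarial probability of violation $\mathbb{V}_\mathcal{A}(\mathbf{r}^*_{\hat{\mathcal{A}}},\mathbf{b}^*_{\hat{\mathcal{A}}})$ can equivalently be written in terms of the auxiliary variables as $\mathbb{P}\{(\boldsymbol{\ell},\boldsymbol{\beta}) : \exists (\tilde{\boldsymbol{\ell}},\tilde{\boldsymbol{\beta}}) \in \mathcal{A}_{\boldsymbol{\ell},\boldsymbol{\beta}}, k \in \mathcal{K} : u^*_{k,\hat{\mathcal{A}}} < \tilde{\ell}_k \text{ or } b^*_{k,\hat{\mathcal{A}}} > \tilde{\beta}_k\}$. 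This is the object that \cite{Campi2025_DRO} bounds.

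Next I would verify that the three structural hypotheses of the cited theorem hold in our setting: (i) convexity — the objective $J(\mathbf{r}) + \rho\sum_i \xi_i$ is convex by Remark \ref{rem:convex} and the added terms are linear, while all constraints in \eqref{eq:PLM-N-rho-A-prime} are affine in $(\mathbf{r},\mathbf{b},\boldsymbol{\xi},\mathbf{u})$ for each fixed data point, and remain convex constraints after taking the worst case over the finitely many extreme points of $\hat{\mathcal{A}}$; (ii) uniqueness of the optimizer — this is Assumption \ref{assum:uniqueness}, with the standard convex tie-break rule available otherwise; and (iii) non-accumulation/non-degeneracy of the binding constraints at the optimum — this is Assumption \ref{assum:nonaccum}, which rules out the probability-one coincidence of distinct data points on the active manifold. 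The nesting condition $\hat{\mathcal{A}}_{\boldsymbol{\ell},\boldsymbol{\beta}} \subseteq \mathcal{A}_{\boldsymbol{\ell},\boldsymbol{\beta}}$ is precisely the hypothesis ensuring that the complexity/support count computed on the finite inner approximation $\hat{\mathcal{A}}$ is an admissible (conservative) surrogate for the true adversarial region $\mathcal{A}$, so that the bound transfers from the tractable program to the semi-infinite adversarial risk $\mathbb{V}_\mathcal{A}$.

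With these checks in place, I would identify our adversarial support sample count $s^*_{\mathcal{A},\hat{\mathcal{A}}}$ — defined through the three alternative conditions listed just before the Proposition (a point of the adversarial region strictly violating, lying exactly on, or belonging to an active constraint of the optimal feasible set) — with the ``complexity'' quantity in \cite[Def.~5]{Campi2025_DRO}. The cited theorem then yields that, with confidence at least $1-\beta$, the true adversarial violation probability lies in $[\underline{\epsilon}(s^*_{\mathcal{A},\hat{\mathcal{A}}}),\overline{\epsilon}(s^*_{\mathcal{A},\hat{\mathcal{A}}})]$, where $\underline{\epsilon}(\cdot),\overline{\epsilon}(\cdot)$ are exactly the functions defined via the polynomial equations \eqref{cor:apost_equation}–\eqref{cor:apost_equation2} of Proposition \ref{prop:relaxation} (the constants $2N$, $6N$, and the summation range $N+1,\dots,4N$ in those equations already encode the particular wait-and-judge/DRO bound of \cite{Campi2025_DRO}). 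Plugging $m = s^*_{\mathcal{A},\hat{\mathcal{A}}}$ closes the argument.

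The main obstacle I anticipate is not any inequality manipulation but the bookkeeping of the equivalence of risk measures and support definitions: one must argue carefully that taking the convex hull of $M$ points and then maximizing affine constraint functions over it is equivalent to maximizing over the $M$ extreme points (so the reformulated program genuinely has finitely many constraints and the scenario machinery applies), and that the three-way definition of an adversarial support sample given in the text is exactly the instance of \cite[Def.~5]{Campi2025_DRO} specialized to the constraint geometry of \eqref{eq:PLM-N-rho-A-prime}. In particular, the ``$u^*_{k,\hat{\mathcal{A}}} = \tilde{\ell}^{(i,j)}_k$ or $b^*_{k,\hat{\mathcal{A}}} = \tilde{\beta}^{(i,j)}_k$'' clause (active but not violated) must be shown to capture precisely the constraints whose removal could change the optimizer, which is where Assumptions \ref{assum:uniqueness} and \ref{assum:nonaccum} do the real work; everything downstream is then a verbatim application of the cited result.
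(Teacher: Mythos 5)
Your proposal follows essentially the same route as the paper: the paper's proof likewise rewrites $\mathbb{V}_\mathcal{A}$ at the optimizer in terms of the auxiliary variables of \eqref{eq:PLM-N-rho-A-prime} (via the scalar function $f(\mathbf{r},\mathbf{b},\mathbf{u},\tilde{\boldsymbol{\ell}},\tilde{\boldsymbol{\beta}})=\max_{k}\max\{u_k-\tilde{\ell}_k,\,b_k-\tilde{\beta}_k\}$), identifies the adversarial support count with the complexity notion of \cite[Def.~5]{Campi2025_DRO}, and concludes by invoking \cite[Thm.~3]{Campi2025_DRO}. Your additional bookkeeping (hypothesis checks, convex-hull/extreme-point equivalence) is consistent with, and merely more explicit than, the paper's argument.
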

\begin{proof} 
    The following equalities hold:
    \begin{align*}
        \mathbb{V}_A(\mathbf{r},\mathbf{b}) &= \mathbb{P}\big\{(\boldsymbol{\ell}, \boldsymbol{\beta}): \exists \tilde{\boldsymbol{\ell}}, \tilde{\boldsymbol{\beta}} \in \mathcal{A}_{\boldsymbol{\ell}, \boldsymbol{\beta}}, k \in \mathcal{K}: \\ 
        & \qquad b_k > \tilde{\beta}_k \text{ or }b^*_k < b^*_{k-1}+q_k+r_k-\tilde{\ell}_k  \big\} \\ 
        & = \mathbb{P}\{(\boldsymbol{\ell}, \boldsymbol{\beta}): \exists\tilde{\boldsymbol{\ell}}, \tilde{\boldsymbol{\beta}} \in \mathcal{A}_{\boldsymbol{\ell}, \boldsymbol{\beta}}: \\ 
        & \qquad \max_{k \in \mathcal{K}}\max\{u^*_{k}-\tilde{\ell}_k,  b^*_k - \tilde{\beta}_k\}  \} >0 \} \\
        &=\mathbb{P}\{(\boldsymbol{\ell}, \boldsymbol{\beta}): \exists \tilde{\delta} \in \mathcal{A}_{\boldsymbol{\ell}, \boldsymbol{\beta}}: f(\mathbf{r},\mathbf{b},\mathbf{u}, \tilde{\boldsymbol{\ell}}, \tilde{\boldsymbol{\beta}})>0 \} 
    \end{align*}
    where function $f$ in the last equality is defined as
    \begin{equation}
        f(\mathbf{r},\mathbf{b},\mathbf{u}, \tilde{\boldsymbol{\ell}}, \tilde{\boldsymbol{\beta}})=\max\limits_{k \in \mathcal{K}}\max\{u_k-\tilde{\ell}_k,  b_k - \tilde{\beta}_k\} 
    \end{equation}
    Then, the result follows from \cite[Thm. 3]{Campi2025_DRO}.
\end{proof}
As such, the PLM can still rely on robustness certificates even when each training sample is mistrusted within a prescribed perturbation set, thus protecting the policy against adversarial or noisy data.

\begin{algorithm}[t]
\caption{Tunable Adversarially Robust VESS}
\begin{algorithmic}[1]
\Require $r_{\max}$; $\boldsymbol{\ell}^{(i)},\boldsymbol{\beta}^{(i)}$ for $i=1,\dots,N$, $j=1,\dots,M$; $\mu$, data perturbation set $\mathcal R$, $M$, $\delta$, update rule $\mathcal U$, bounds $N_{\max},\rho_{\max}$, tolerance $\tau$, $S_{\max}$
\State \textit{Retailer--PLM}: Contract agreement on selling and buying price $(\pi_k^{+},\pi_k^{-})$ per time step $k\in\mathcal K$.
\State \textit{PLM--Prosumers}: Contract agreement on energy request $q_k$ per time step $k\in\mathcal K$.
\State \textit{PLM}: Fix $\epsilon_{\text{goal}}$, and set $\epsilon \gets \infty$, $\epsilon_{\mathrm{prev}} \gets \infty$, $s \gets 0$
\While{$\epsilon > \epsilon_{\text{goal}}$}
    \State $\epsilon \gets \infty$
    \ForAll{$R \in \mathcal R$}
        \State Solve \eqref{eq:PLM-N-rho-A-prime} and obtain $\mathbf{x}^*_{\hat{\mathcal{A}}}(R)$
        \State Compute the adversarial complexity $s^*_{\mathcal{A},\hat{\mathcal{A}}}(R)$
        \State Compute $\epsilon(R) \gets \overline{\epsilon}\big(s^*_{\mathcal{A},\hat{\mathcal{A}}}(R)\big)+\mu/R$
        \If{$\epsilon(R) < \epsilon$}
            \State $\epsilon \gets \epsilon(R)$, $R^* \gets R$
            \State $(\mathbf{r}^*_{\text{safe}}, \boldsymbol{b}^*_{\text{safe}}) \gets (\mathbf{r}^*_{\hat{\mathcal{A}}}(R), \boldsymbol{b}^*_{\hat{\mathcal{A}}}(R))$
        \EndIf
    \EndFor
    \State $s \gets \begin{cases}
    s+1, & \text{if } |\epsilon_{\mathrm{prev}}-\epsilon|\le \tau\\
    0, & \text{otherwise}
    \end{cases}$
    \State $\epsilon_{\mathrm{prev}} \gets \epsilon$
    \If{$s \ge S_{\max}$ \textbf{or} $(N,\rho)=(N_{\max},\rho_{\max})$}
        \State \textbf{break}
    \EndIf
    \State $(N,\rho) \gets \mathcal U(N,\rho)$
\EndWhile
\State \Return $(\mathbf{r}^*_{\text{safe}},\, \boldsymbol{b}^*_{\text{safe}},\, \epsilon,\, R^*)$
\end{algorithmic}
\end{algorithm}

\subsection{EV Departures and Capacity Distributional Shifts}
The training set $\{\boldsymbol{\ell}^{(i)}, \boldsymbol{\beta}^{(i)}\}_{i \in \mathcal{N}}$ can originate from, e.g., synthetic models or real-world measurements/ historical data. However, due to distributional shifts, the data used for training might not follow the same distribution as the data collected after deployment. In our setting, the PLM has collected data on the losses incurred from vehicle departures and data of imposed upper bounds in the state of charge of the virtual storage. However, the PLM wishes to have safety certificates against yet unseen realizations of these quantities that might follow a different distribution $\mathbb{P}'$. Unfortunately, without some connection between the probability distributions $\mathbb{P}$ and $\mathbb{P}'$, it is extremely challenging to provide any provable guarantees. To establish such results, a metric of similarity among distributions is often considered. An often used measure, due to its intuitive interpretation based on optimal transport, is the so-called Wasserstein distance defined as follows: 

\begin{definition}
Consider the uncertain parameters $(\ell, \beta)$ and $(\tilde{\ell}, \tilde{\beta})$ following the probability distributions $\mathbb{P}$ and $\mathbb{P}'$. 
Then, the Wasserstein metric is given by: 
\begin{equation}
d_W(\mathbb{P}, \mathbb{P}')=\inf_{\mathbb{Q}} \mathbb{E}_\mathbb{Q}[\|\ell-\tilde{\ell}\|_2+|\beta-\tilde{\beta}\|_2],
\end{equation}
where $\mathbb{Q}$ denotes a joint probability distribution of random variables with marginals $\mathbb{P}$ and $\mathbb{P}'$. \hfill $\square$
\end{definition}
Based on the Wasserstein distance, we can then define the ambiguity set that the PLM selects to account for risk aversion against probabilistic shifts with respect to this metric. To achieve this, the PLM needs to decide on a radius, which determines the size of the ambiguity set. As such, we assume that the distance between the training data distribution $\mathbb{P}$ and the test data distribution $\mathbb{P}'$ is coupled via the inequality $d_W(\mathbb{P}, \mathbb{P}') \leq \mu$ for some PLM defined Wasserstein radius $\mu>0$. The ambiguity set defined based on $d_W$, is defined as $\mathbb{B}_{\mu}(\mathbb{P})=\{\mathbb{P}': d_W(\mathbb{P}, \mathbb{P}') \leq \mu \}$.
We define the probability of violation for a distribution $\mathbb{P}'  \in \mathbb{B}_{\mu}(\mathbb{P})$ as:
\begin{equation}
\mathbb{V}'_\mathcal{A}(\mathbf{r}, \mathbf{b}):= \mathbb{P}'\{(\boldsymbol{\ell}', \boldsymbol{\beta}'): f(\mathbf{r},\mathbf{b},\boldsymbol{u}, \boldsymbol{\ell}', \boldsymbol{\beta}') > 0\}
\end{equation}
Furthermore, we assume that $\|\boldsymbol{\ell}_K-\tilde{\boldsymbol{\ell}}_K\|_2 \leq R_\ell$ and $\|\boldsymbol{\beta}_K-\tilde{\boldsymbol{\beta}}_K\|_2 \leq R_\beta$, where $R_\ell, R_\beta \in \mathbb{R}_{\geq 0}$ are choices of the designer that determine how much they trust the possible realizations of the uncertainty obtained from distributions within the ambiguity set. In this setting, the following holds:
\begin{theorem} 
    \label{thrm:dist_adversarial_bound} 
    Consider Assumptions \ref{assum:energy-prices}, 2 and \ref{assum:nonaccum} and $\mathbb{P}' \in \mathbb{B}_{\mu}(\hat{\mathbb{P}})$. Then, with confidence at least $1-\delta$:
    \begin{equation}
        \mathbb{V}'_\mathcal{A}(\mathbf{r}^*_{\hat{A}}, \mathbf{b}^*_{\hat{A}}) 
           \leq \overline{\epsilon}(s^*_{A, \hat{A}})
           + \frac{\mu}{R}
    \end{equation}
    where $R=R_\beta+R_\ell$ and $s^*_{\mathcal{A}, \hat{\mathcal{A}}}$ is the adversarial complexity. 
\end{theorem}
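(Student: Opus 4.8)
The plan is to combine the adversarially robust scenario guarantee of Proposition~\ref{thrm:adv_guarantees} with a Wasserstein-based transport bound that controls how much the violation probability can grow when the test distribution $\mathbb{P}'$ drifts away from the (empirical/training) distribution $\hat{\mathbb{P}}$ within the ambiguity ball $\mathbb{B}_\mu(\hat{\mathbb{P}})$. The starting point is the observation that, by Proposition~\ref{thrm:adv_guarantees}, with confidence at least $1-\beta$ the \emph{adversarial} violation probability under the nominal distribution satisfies $\mathbb{V}_\mathcal{A}(\mathbf{r}^*_{\hat{\mathcal{A}}}, \mathbf{b}^*_{\hat{\mathcal{A}}}) \le \overline{\epsilon}(s^*_{\mathcal{A}, \hat{\mathcal{A}}})$. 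Since by construction any perturbation of a nominal sample within the adversarial region $\mathcal{A}_{\boldsymbol{\ell},\boldsymbol{\beta}}$ is already accounted for in $\mathbb{V}_\mathcal{A}$, a draw from $\mathbb{P}'$ can only cause a violation of $f(\mathbf{r},\mathbf{b},\mathbf{u},\cdot,\cdot)>0$ that is \emph{not} already captured by $\mathbb{V}_\mathcal{A}$ if the realized $(\boldsymbol{\ell}',\boldsymbol{\beta}')$ lies outside every adversarial region around the support of $\mathbb{P}$; the probability of this event is what we must bound by $\mu/R$.

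First I would fix an optimal transport plan $\mathbb{Q}$ between $\mathbb{P}'$ and $\hat{\mathbb{P}}$ that (nearly) attains the Wasserstein infimum, so that $\mathbb{E}_\mathbb{Q}[\|\boldsymbol{\ell}'-\boldsymbol{\ell}\|_2^2 + \|\boldsymbol{\beta}'-\boldsymbol{\beta}\|_2^2] \le \mu$. Next I would introduce the ``bad'' event $E = \{(\boldsymbol{\ell}',\boldsymbol{\beta}') : f(\mathbf{r}^*,\mathbf{b}^*,\mathbf{u}^*,\boldsymbol{\ell}',\boldsymbol{\beta}')>0\}$ and decompose $\mathbb{P}'(E)$ along the transport plan: couple each $(\boldsymbol{\ell}',\boldsymbol{\beta}')\sim\mathbb{P}'$ with its partner $(\boldsymbol{\ell},\boldsymbol{\beta})\sim\hat{\mathbb{P}}$. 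If the partner's adversarial region $\mathcal{A}_{\boldsymbol{\ell},\boldsymbol{\beta}}$ contains $(\boldsymbol{\ell}',\boldsymbol{\beta}')$ then a violation at $(\boldsymbol{\ell}',\boldsymbol{\beta}')$ is already counted inside $\mathbb{V}_\mathcal{A}$; otherwise the transport distance $\|\boldsymbol{\ell}'-\boldsymbol{\ell}\|_2^2 + \|\boldsymbol{\beta}'-\boldsymbol{\beta}\|_2^2$ must exceed the ``radius'' $R = R_\ell + R_\beta$ of the adversarial region (this is exactly the role of the designer's trust parameters $R_\ell, R_\beta$: they are chosen so that $\mathcal{A}_{\boldsymbol{\ell},\boldsymbol{\beta}}$ contains at least the $R$-ball in the squared-distance metric). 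Therefore $\mathbb{P}'(E) \le \mathbb{V}_\mathcal{A}(\mathbf{r}^*_{\hat{\mathcal{A}}},\mathbf{b}^*_{\hat{\mathcal{A}}}) + \mathbb{Q}\{\|\boldsymbol{\ell}'-\boldsymbol{\ell}\|_2^2+\|\boldsymbol{\beta}'-\boldsymbol{\beta}\|_2^2 \ge R\}$, and a Markov inequality applied to the nonnegative transport cost gives $\mathbb{Q}\{\cdot \ge R\} \le \mathbb{E}_\mathbb{Q}[\cdot]/R \le \mu/R$. Combining with the confidence statement from Proposition~\ref{thrm:adv_guarantees} yields the claim with confidence $1-\beta$.

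The main obstacle I anticipate is making the coupling argument rigorous at the interface between the \emph{sampled} adversarial approximation $\hat{\mathcal{A}}$ and the \emph{true} adversarial set $\mathcal{A}$: Proposition~\ref{thrm:adv_guarantees} certifies $\mathbb{V}_\mathcal{A}$ (with respect to the full region $\mathcal{A}$) only under the condition $\hat{\mathcal{A}}_{\boldsymbol{\ell},\boldsymbol{\beta}} \subseteq \mathcal{A}_{\boldsymbol{\ell},\boldsymbol{\beta}}$, and one must verify that the event "$(\boldsymbol{\ell}',\boldsymbol{\beta}')$ lies in the partner's region" indeed refers to the full $\mathcal{A}_{\boldsymbol{\ell},\boldsymbol{\beta}}$ and not merely its convex-hull approximation, so that the leftover mass is genuinely the tail of the transport cost. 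A secondary technical point is that the Wasserstein distance in the definition is stated as the infimum of the expected \emph{squared} Euclidean distance (rather than its square root), so the Markov step must be carried out directly on this squared cost against the threshold $R$ expressed in the same squared units --- this is why $R = R_\ell + R_\beta$ appears additively, matching the additive form of the cost $\|\boldsymbol{\ell}-\tilde{\boldsymbol{\ell}}\|_2^2 + \|\boldsymbol{\beta}-\tilde{\boldsymbol{\beta}}\|_2^2$; I would be careful to keep these conventions consistent throughout. Finally I would note that the confidence level $1-\beta$ is inherited verbatim from Proposition~\ref{thrm:adv_guarantees}, since the additive $\mu/R$ term is deterministic given the designer's choices and introduces no further randomness.
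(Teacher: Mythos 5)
Your argument is correct in substance, but it takes a genuinely different route from the paper: the paper's proof is a two-line delegation --- it observes that the out-of-distribution risks of \eqref{eq:PLM-N-rho-A} and \eqref{eq:PLM-N-rho-A-prime} coincide and then invokes Theorem~5 of \cite{adv-scenario} on the reformulated program --- whereas you essentially \emph{reprove} that imported result from first principles. Your chain (take the confidence-$(1-\beta)$ event from Proposition~\ref{thrm:adv_guarantees} on which $\mathbb{V}_\mathcal{A}(\mathbf{r}^*_{\hat{\mathcal{A}}},\mathbf{b}^*_{\hat{\mathcal{A}}})\le\overline{\epsilon}(s^*_{\mathcal{A},\hat{\mathcal{A}}})$; fix a near-optimal coupling $\mathbb{Q}$ of $\mathbb{P}'$ and the nominal distribution; split the violation event according to whether the test point falls inside its partner's adversarial region, absorbing the first piece into $\mathbb{V}_\mathcal{A}$ and bounding the second by Markov applied to the transport cost at threshold $R$) is exactly the mechanism that makes adversarial robustness purchase distributional robustness, and it is sound: the inequality $\mathbb{V}'_\mathcal{A}\le\mathbb{V}_\mathcal{A}+\mu/R$ is pathwise in the training data, so the confidence level is inherited unchanged, as you note. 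What your route buys is a self-contained, inspectable proof that makes explicit \emph{why} the additive term is $\mu/R$ and where the geometry of $\mathcal{A}$ enters; what the paper's route buys is brevity and the exact bookkeeping of hypotheses, which is outsourced to the cited theorem.

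One caveat deserves attention, and it sits precisely at the step you already flagged. Your contrapositive ``$(\boldsymbol{\ell}',\boldsymbol{\beta}')\notin\mathcal{A}_{\boldsymbol{\ell},\boldsymbol{\beta}}$ implies transport cost at least $R$'' requires $\mathcal{A}_{\boldsymbol{\ell},\boldsymbol{\beta}}$ to contain the \emph{joint} sublevel set $\{\|\boldsymbol{\ell}'-\boldsymbol{\ell}\|_2^2+\|\boldsymbol{\beta}'-\boldsymbol{\beta}\|_2^2\le R_\ell+R_\beta\}$. The paper's standing assumption is stated componentwise ($\|\boldsymbol{\ell}-\tilde{\boldsymbol{\ell}}\|_2^2\le R_\ell$ \emph{and} $\|\boldsymbol{\beta}-\tilde{\boldsymbol{\beta}}\|_2^2\le R_\beta$), which most naturally describes a product of two balls; a point outside that product set need only have cost exceeding $\min\{R_\ell,R_\beta\}$, so under that reading your argument yields $\mu/\min\{R_\ell,R_\beta\}$ rather than $\mu/(R_\ell+R_\beta)$. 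So either the adversarial region must be taken to contain the sum-ball (your stated reading), or the constant in the bound degrades; this is an ambiguity in the paper's setup rather than an error in your reasoning, but it is the one hypothesis you should pin down before calling the proof complete.
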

\emph{Proof:} The proof follows by the equivalence of the out-of-distribution risks of \eqref{eq:PLM-N-rho-A-prime} and the application of Theorem 5 in \cite{Campi2025_DRO}. \hfill $\blacksquare$

From a practical standpoint, Theorem \ref{thrm:dist_adversarial_bound} tells the PLM how much out-of-distribution risk it can certify when future EV behaviour differs from the training data, while accounting for both sample mistrust and distributional shift. Note that being more risk-averse towards probabilistic shifts by increasing $\mu$ results in a looser bound. While a larger $R$ would seemingly improve the bound, this is not necessarily the case, as a larger $R$ can lead to a larger number of adversarial support samples, which can then worsen the guarantees. As such, the risk-aversion of the PLM will have a direct effect on the quality of the theoretical guarantees they can provide.
\begin{remark}
In summary, the parameters $\rho$, $R$, and $\mu$ admit a direct operational interpretation for the PLM. Specifically, $\rho$ reflects the desired profit-vs-safety preference, $R$ reflects the amount of mistrust in the training samples due to noise, forecasting error, or possible corruption, and $\mu$ quantifies the level of protection against distributional shift around the empirical distribution. In practice, these quantities can be calibrated from historical variability and validation data, and then selected over a finite grid of options. 
\end{remark}

Algorithm~1 describes a general methodology for computing a certified virtual energy storage policy for the PLM. For a fixed target violation level $\epsilon_{\mathrm{goal}}$, the PLM solves~\eqref{eq:PLM-N-rho-A-prime} over a finite set of candidate trust radii $R\in\mathcal R$. For each $R$, it computes the corresponding adversarial complexity $s_{\mathcal A,\hat{\mathcal A}}^\star(R)$ and evaluates the certificate $\overline{\epsilon}(s_{\mathcal A,\hat{\mathcal A}}^\star(R))+\mu/R$, as implied by Theorem~1. The radius yielding the smallest certificate is selected, and the associated solution is stored as the current best certified policy. If the target level is not met, the PLM updates the tuning parameters through a designer-defined rule $\mathcal U$, which may increase $N$, $\rho$, or both, i.e.,  
$\mathcal U(N,\rho)=(\min\{N+N^+,N_{\max}\},\rho)$,
$\mathcal U(N,\rho)=(N,\min\{\rho+\rho^+,\rho_{\max}\})$,
or
$\mathcal U(N,\rho)=(\min\{N+N^+,N_{\max}\},\min\{\rho+\rho^+,\rho_{\max}\})$, respectively. The procedure stops when the desired certificate is reached, when the improvement becomes negligible for several consecutive iterations, or when the admissible budgets on $N$ and $\rho$ are exhausted. In all cases, Algorithm 1 returns the best certified policy found so far, together with its certificate and selected radius $R^\ast$.

\section{Numerical Study}
\begin{figure}[t] 
  \centering
  \includegraphics[width=0.49\textwidth]{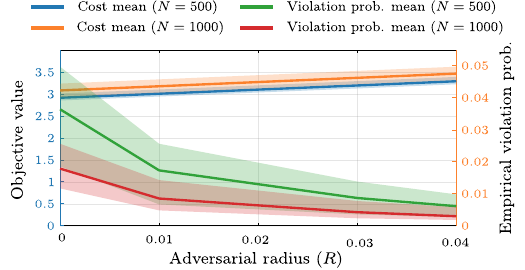}
  \caption{\small Trade-off study between adversarially robust empirical probability of violation vs the profit of the PLM for varying values of $R$.}
  \label{fig:trade-off}
\end{figure}

\begin{figure}[t] 
  \centering
  \includegraphics[width=0.49\textwidth]{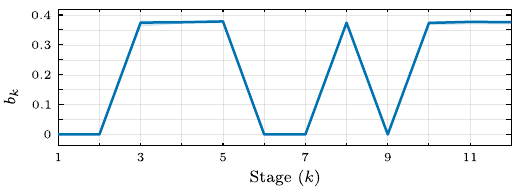}
  \caption{\small Optimal virtual state of charge $b_k$ of the PLM's energy buffer at each time step $k$ for  $R=0.01$ and $\rho=1$.}
  \label{fig:b}
\end{figure}

\begin{figure}[t] 
  \centering
  \includegraphics[width=0.49\textwidth]{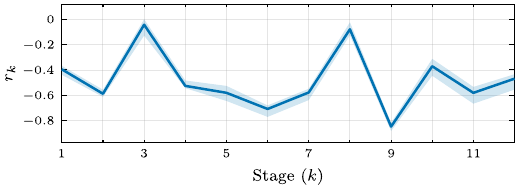}
  \caption{\small Optimal energy $r_k$ sold to the retailer at each time step $k$ for  $R=0.01$ and $\rho=1$.}
  \label{fig:r}
\end{figure}

\begin{figure}[t] 
  \centering
  \includegraphics[width=0.4\textwidth]{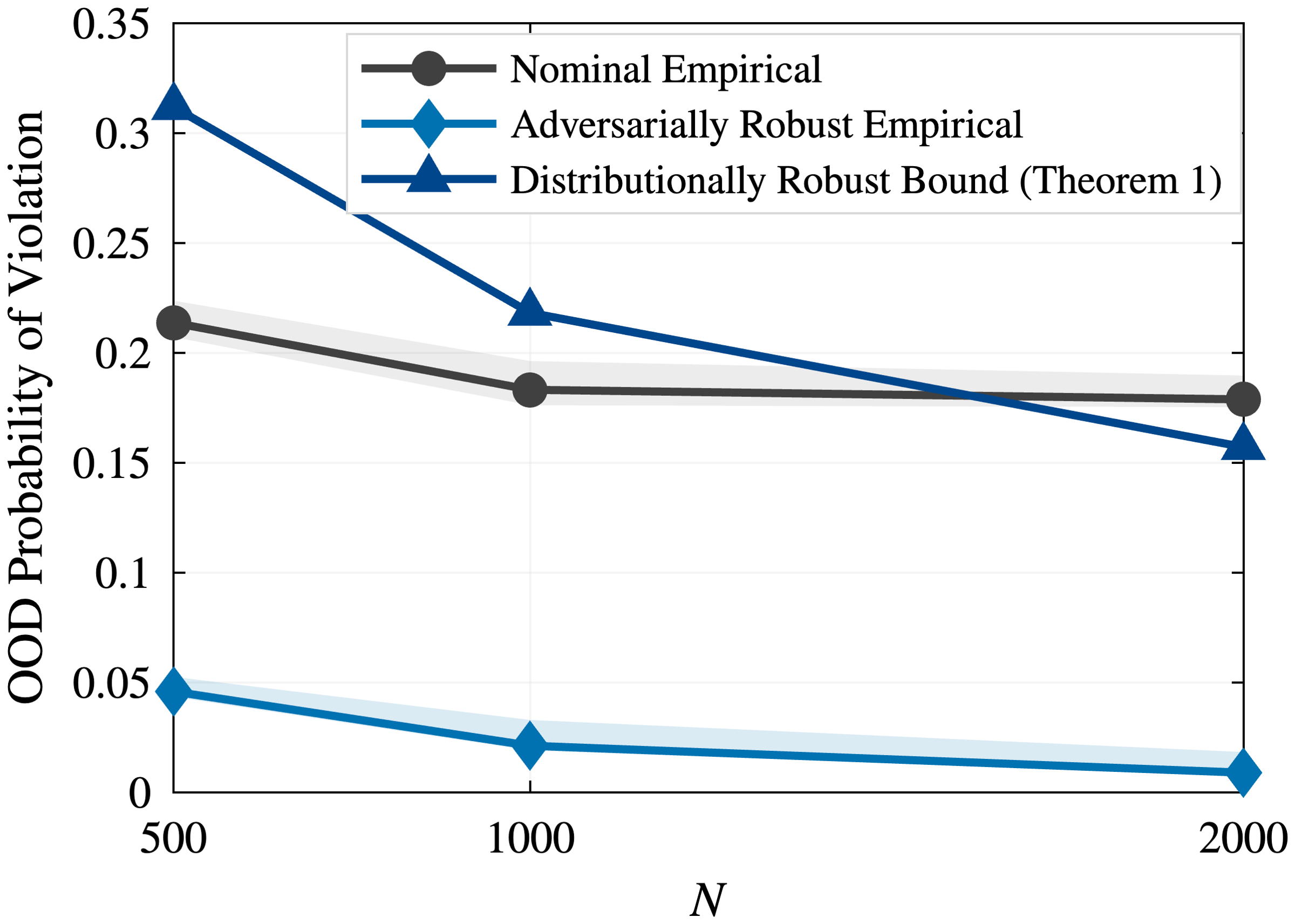}
  \caption{OOD violation probabilities show that the adversarially robust policy consistently remains below the certified bound of Thm. \ref{thrm:dist_adversarial_bound}, while the nominal policy exhibits significantly poorer safety and exceeds the certified risk level achieved by the robust policy at $N=2000$.}
  \label{fig:OOD}
\end{figure}

We consider training data generated entrywise as $\ell_{\mathrm{nom}} \sim 0.1\,\mathcal{N}(0,1)$ and $\beta_{\mathrm{nom}} \sim 0.4 + 0.5\,\mathcal{U}[0,1]$. We then generate $j=1,\dots,6$ perturbed scenarios as $\ell^{(j)}=\ell_{\mathrm{nom}}+s_\ell\,\mathcal{N}(0,1)$ and $\beta^{(j)}=\max\!\left\{0,\beta_{\mathrm{nom}}+s_\beta\,\mathcal{N}(0,1)\right\}$, where $s_\ell=s_\beta=0.01$. In the robust formulation, $R$ acts as an additional trust-radius parameter reflecting the PLM's level of mistrust in the training data.  In Figure \ref{fig:trade-off}, $R$ varies in $[0,0.04]$ for parametric analysis, while for Figures \ref{fig:b} and \ref{fig:r} it is fixed at $R=0.01$. For Figure \ref{fig:OOD}, $R$ is selected differently, i.e., by grid search over candidate trust radii as in Algorithm~1, while $\mu$ and $\rho$ should be interpreted as design parameters that, in practice, would be calibrated from historical inter-day variability and the operator's preferred profit-risk profile. We set these values to $\mu=10^{-3}$ and $\rho=1$, respectively. The exogenous prosumer request and retailer prices are random but fixed for each simulation, given by
$q_k = 0.2\sin\!\left(\frac{k}{4}\right) + 0.1\,w_k, w_k \sim \mathcal{N}(0,1)$, while $\pi_k^{+}$, $\pi_k^{-}$ are also fixed per time step according to
$\pi_k^{+} = 1 + u_k^{+}$, and 
$\pi_k^{-} = -1 + 0.5\,u_k^{-}$, where $u_k^{+}, u_k^{-} \sim \mathcal{U}[0,1]$.

The objective function of the PLM and the probability of violation for sample sizes $N \in \{500, 1000\}$ and varying values of the radius $R$ is shown in Figure \ref{fig:trade-off}. Note that for a higher number of samples, the probability of violation improves significantly at the expense of a slightly higher cost value. 
The horizon is fixed at $K=12$ time steps, and the energy $r_k$ bought/sold from/to the retailer at each time step $k$, is bounded by $r_{\max}=5$.

Figures \ref{fig:b} and \ref{fig:r} illustrate the virtual state of charge $b_k$ of the PLM's energy buffer and the energy $r_k$ sold to the retailer at each time step $k$ for different multi-samples. To evaluate the out-of-distribution (OOD) performance we consider a test data set $(\boldsymbol{\ell}^{(i)}, \boldsymbol{\beta}^{(i)})$, $i \in \{1, \dots, N_{\text{test}}\}$. The samples are obtained each time from $N'$ different probability distributions $\mathbb{P}_v, v \in \{1, \dots, N'\}$ obtained by perturbing the nominal probability distribution in different ways and then scaling them down such that they belong to the considered ambiguity set. We then wish to test the OOD violation level for each of those perturbed probability distributions. To do this, we calculate the corresponding empirical probability of violation defined as:
\begin{align}
\hat{\mathbb{V}}_{v}(\boldsymbol{\ell}, \boldsymbol{\beta})=\frac{1}{N_{\text{test}}} \sum_{i=1}^{N_{\text{test}}} \mathds{1}_{\{  \exists k \in \mathcal{K}: b^*_k <  b^*_{k-1} + q_k + r^\ast_k - \ell^{(i)}_k \text{ or } b^*_k > \beta^{(i)}_{k} \}}, \nonumber 
\end{align}
where $N_{\text{test}}=10^{4}$.
To see how well our model performs against probabilistic shifts, we use the empirical mean across the empirical probabilities of violation of $N'=40$ distributions within the considered ambiguity set. The results are summarized in Figure \ref{fig:OOD}, where $\delta=10^{-5}$ and a different number of samples $N \in \{500,1000, 2000\}$ is used. The theoretical OOD level is computed by testing $30$ positive values of $R$ and selecting the one minimizing $\overline{\varepsilon} \!\bigl(s^*(R)\bigr)+\mu/R$. Specifically, for the trust-radius selection, we test \(n_R=30\) logarithmically spaced candidate values in the interval \([3\mu,\,0.25]\), i.e., $\mathcal{R}=\{R_1,\dots,R_{30}\}\subseteq [3\mu,\,0.25]$.
The radius is then selected by grid search over \(\mathcal{R}\), and the confidence budget is split uniformly across the tested radii, namely \(\delta_R=\delta/30\). The OOD probability of violation of the nominal policy, i.e., the optimal PLM policy obtained without adversarial training, serves as an empirical benchmark, showing numerically that a solution trained without adversarial samples exhibits substantially worse performance under distribution shifts and exceeds the certified risk level achieved by the adversarially robust policy at $N=2000$. 

Finally, note that the theoretical OOD level is a high-confidence worst-case certificate over the entire ambiguity set, whereas the empirical OOD violation is computed only on
40 sampled perturbation models. Hence, the theoretical level is expected to be conservative and need not be numerically tight compared to the empirical OOD probability of violation.

\section{Conclusion}
This paper develops a distributionally robust framework based on scenario optimization that enables a parking-lot manager to operate aggregated EVs as a virtual energy storage system, providing profit/risk tuning flexibility and finite-sample guarantees under adversarial perturbations and Wasserstein distribution shifts. Numerical simulations of the proposed model show agreement between empirical violations and theoretical bounds. Future work will involve integrating user-centric EV battery health considerations into this scheme and modelling the EV users as active participants of the parking lot management system. Furthermore, this model can be extended to large-scale implementation by incorporating multiple interacting parking lots to model market participation and network constraints. Finally, we will focus on real-world deployment through data-driven estimation of EV departure distributions, realistic metering and forecasting error models, and online recalibration of the design parameters $\rho$, $R$, and $\mu$ to reflect evolving operating conditions.

\balance
\bibliographystyle{IEEEtran}
\bibliography{references} 

\end{document}